\newtheorem{theorem}{Theorem}[section]
\newtheorem{lemma}[theorem]{Lemma}
\newtheorem{corollary}[theorem]{Corollary}
\theoremstyle{definition}
\newtheorem{definition}[theorem]{Definition}
\newtheorem{example}[theorem]{Example}
\newtheorem{remark}[theorem]{Remark}
\numberwithin{equation}{section}
\newcommand{\pmba}{\raisebox{-6pt}{\begin{tikzpicture}[scale=.3]
\draw [dashed, decorate, decoration={brace, amplitude=5pt}] (0,0) -- (0,2);
\end{tikzpicture}}}
\newcommand{\pmbb}{\raisebox{-6pt}{\begin{tikzpicture}[scale=.3]
\draw [dashed, decorate, decoration={brace, amplitude=5pt}] (2,2) -- (2,0);
\end{tikzpicture}}}
\newcommand{\pmbaa}{\pmba\hspace{-5pt}\pmba}
\newcommand{\pmbbb}{\pmbb\hspace{-5pt}\pmbb}
\newcommand\N {{\mathbb N}} 
\newcommand\R {{\mathbb R}}
\newcommand\Z {{\mathbb Z}} 
\newcommand\st{{\rm st}} 
\newcommand\astx{{}^{\ast}\hspace*{-2.9pt}X}
\newcommand\astu{{}^{\ast}\hspace*{-1.14pt}U}
\newcommand\asta{{{}^\ast\hspace*{-3.5pt}A}}
\newcommand\asts{{}^{\ast}\hspace*{-1.7pt}S}
\newcommand{\astb}{{}^{\ast}\hspace*{-2.5pt}B}
\newcommand\astf{{{}^{\ast}\hspace*{-3pt}f}}
\newcommand\astn{{{}^{\ast}\hspace*{-1pt}\N}}
\newcommand\astp{{{}^{\ast}\hspace*{-0.5pt}\mathbb{P}}}
\newcommand\astr{{{}^\ast\hspace*{-.6pt}\R}}
\newcommand\asto{{{}^\ast\hspace*{-.6pt}O}}
\newcommand\eqi{\longleftrightarrow}
\newcommand\ts{T}
\newcommand\astt{{}^\ast T}
\DeclareMathOperator\sh{sh}
\title{Effective infinitesimals in~$\mathbb R$}
\begin{document}

\author{Karel Hrbacek} \address{K.~Hrbacek, Department of Mathematics,
  City College of CUNY, New York, NY 10031}
\email{khrbacek@icloud.com}

\author{Mikhail G. Katz}\address{M.~Katz, Department of Mathematics,
  Bar Ilan University, Ramat Gan 5290002
  Israel}\email{katzmik@math.biu.ac.il}

\subjclass[2020]{Primary 26E35,
Secondary 03A05, 03C25, 03C62, 03E70, 03H05}

\keywords{effective analysis; infinitesimals; nonstandard analysis}
 
\begin{abstract}
We survey the effective foundations for analysis with infinitesimals
developed by Hrbacek and Katz in 2021, and detail some applications.
Theories SPOT and SCOT are conservative over respectively ZF and
ZF+ADC.  The range of applications of these theories illustrates the
fact that analysis with infinitesimals requires no more choice than
traditional analysis.  The theory SCOT incorporates in particular all
the axioms of Nelson's Radically Elementary Probability Theory, which
is therefore conservative over ZF+ADC.
\end{abstract}

\thispagestyle{empty}

\maketitle
\tableofcontents

\section{Introduction}
\label{s1}

Let ZF be the Zermelo--Fraenkel set theory.  Let ZFC be the
Zermelo--Fraenkel set theory with the axiom of choice.  Let ACC be the
axiom of countable choice, and ADC the axiom of (countable) dependent
choice.  The theories ZF, ZF+ACC, and ZF+ADC have the advantage (over
ZFC) of not entailing set-theoretic paradoxes such as Banach--Tarski.
Similarly, ZF, ZF+ACC, and ZF+ADC do not prove the existence of
nonprincipal ultrafilters.

The theories SPOT and SCOT developed in \cite{21e} provide frameworks
for analysis with infinitesimals that are conservative respectively
over ZF and ZF+ADC, and therefore share the same advantage (the axioms
of SPOT and SCOT appear in Section~\ref{s4}).  Mathematicians
generally consider theorems provable in ZF as more effective than
results that require the full ZFC for their proof, and many feel this
way not only about ZF but about ZF+ADC, as well.  In this sense, the
theories SPOT and SCOT enable an effective development of analysis
based on infinitesimals.  Some applications were already presented
in~\cite{21e}, such as (local) Peano's existence theorem for
first-order differential equations \cite[Example 3.5]{21e} and
infinitesimal construction of Lebesgue measure via counting measures
\cite[Example~3.6]{21e}.

We first consider the case of compactness.  In Section~\ref{s2}, we
present the traditional extension view.  Following an outline of SPOT
and SCOT in Sections~\ref{ist} and \ref{s4}, we deal with compactness
in internal set theories in Section~\ref{s3}.  After preliminaries on
continuity in Section~\ref{s6}, we present an effective proof using
infinitesimals of the compactness of a continuous image of a compact
set in Section \ref{s7}.  After preliminaries on uniform continuity in
Sections~\ref{s8} and \ref{s9}, we present an effective proof using
infinitesimals of the Heine--Borel theorem in Section~\ref{s10}.  In
Section~\ref{s11}, we show that Nelson's \emph{Radically Elementary
  Probability Theory} is a subtheory of SCOT.

\section{Compactness in the extension view}
\label{s2}

In this section, we analyze compactness from the viewpoint of
traditional extensions~$\R\hookrightarrow\astr$ to hyperreals.  These
cannot be constructed in ZF+ADC and cannot be described as effective
in the sense of Section~\ref{s1}.  In Section~\ref{s3}, we will
present an effective treatment of compactness in axiomatic frameworks
for analysis with infinitesimals.

For~$\N$,~$\R$,~$\mathbb P=\mathcal{P}(\R)$, or any set~$X$, the
corresponding nonstandard extensions~$\astx$, etc.~ satisfying the
transfer principle can be formed either via the compactness theorem of
first-order logic, or via ultrapowers~$X^\N/\mathcal F$, etc., in
terms of a fixed nonprincipal ultrafilter~$\mathcal F$.

\begin{lemma}
\label{l1012}
For a finite union, the star of the union is the union of stars.
\end{lemma}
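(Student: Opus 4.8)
The plan is to reduce to the union of two sets and then invoke the transfer principle. First I would note that, by associativity of union and an easy induction on the number of sets, it suffices to treat the case of two sets, namely ${}^\ast(A\cup B)=\asta\cup\astb$. The inductive step writes $A_1\cup\cdots\cup A_{n+1}=(A_1\cup\cdots\cup A_n)\cup A_{n+1}$ and applies the two-set identity together with the inductive hypothesis.

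For the two-set case I would exploit transfer directly. The trivially true first-order sentence
\[
\forall x\,\bigl(x\in A\cup B\;\leftrightarrow\;(x\in A\;\vee\;x\in B)\bigr)
\]
merely expresses the defining property of the union. Applying the transfer principle, which holds for the nonstandard extension by construction whether it is obtained from the compactness theorem of first-order logic or from an ultrapower, replaces each set by its star and yields
\[
\forall x\,\bigl(x\in{}^\ast(A\cup B)\;\leftrightarrow\;(x\in\asta\;\vee\;x\in\astb)\bigr),
\]
which by extensionality is exactly the assertion ${}^\ast(A\cup B)=\asta\cup\astb$.

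If one prefers to argue concretely through an ultrapower $X^\N/\mathcal F$, then for a representative $f$ one has $[f]\in{}^\ast(A\cup B)$ iff $\{n:f(n)\in A\cup B\}\in\mathcal F$, and this index set is $\{n:f(n)\in A\}\cup\{n:f(n)\in B\}$. The reverse inclusion $\asta\cup\astb\subseteq{}^\ast(A\cup B)$ is then immediate from monotonicity of the star operation, since $A,B\subseteq A\cup B$. The one step that genuinely requires attention, and the main obstacle, is the forward inclusion ${}^\ast(A\cup B)\subseteq\asta\cup\astb$: here I would use that $\mathcal F$ is an \emph{ultra}filter, so that $S\cup T\in\mathcal F$ forces $S\in\mathcal F$ or $T\in\mathcal F$ (a mere filter does not suffice). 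This primeness propagates to any finite union but fails for infinite ones, which is precisely why the hypothesis of finiteness cannot be dropped.
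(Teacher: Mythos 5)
Your proposal is correct and follows essentially the same route as the paper: apply transfer to the first-order sentence expressing $y\in A\cup B \leftrightarrow (y\in A)\vee(y\in B)$ and then extend to finitely many sets by induction. The supplementary ultrapower verification (using that $\mathcal F$ is an ultrafilter, hence prime, for the forward inclusion) is a correct added detail that the paper omits, but it does not change the essential argument.
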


\begin{proof}
Given sets~$A,B\subseteq X$, we have
\begin{equation}
\label{e11}
(\forall{}y\in{}X)\big[y\in{}A\cup{}B \eqi
  (y\in{}A)\vee(y\in{}B)\big].
\end{equation}
Applying upward transfer to \eqref{e11}, we obtain
\[
(\forall{}y\in{}\astx) \big[y\in{}^\ast\!(A\cup B) \eqi
  (y\in\asta)\vee(y\in\astb)\big],
\]
and the claim follows by induction.
\end{proof}

\begin{theorem}
\label{t971}
If~$\langle A_n \colon n\in\N\rangle$ is a nested sequence of nonempty
subsets of~$\R$ then the sequence~$\langle\asta_n\colon n\in\N\rangle$
(standard~$n$) has a common point.
\end{theorem}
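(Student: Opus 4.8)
The plan is to treat the nested sequence as a single function, apply upward transfer to obtain an \emph{internal} sequence indexed by all of~$\astn$, and then extract a common point by evaluating that sequence at an \emph{unlimited} index. Concretely, I would regard $\langle A_n\colon n\in\N\rangle$ as a function $f\colon\N\to\mathbb P$ (where $\mathbb P=\mathcal P(\R)$) with $f(n)=A_n$. Its star $\astf\colon\astn\to\astp$ is then an internal sequence whose value at each standard~$n$ is~$\asta_n$.

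Next I would transfer the two defining features of the sequence. The sentences $(\forall n\in\N)\,[\,f(n)\neq\emptyset\,]$ and $(\forall n\in\N)\,[\,f(n{+}1)\subseteq f(n)\,]$ transfer upward (as in the proof of Lemma~\ref{l1012}) to
\[
(\forall\nu\in\astn)\,[\,\astf(\nu)\neq\emptyset\,]
\qquad\text{and}\qquad
(\forall\nu\in\astn)\,[\,\astf(\nu{+}1)\subseteq\astf(\nu)\,].
\]
From the nestedness statement, internal induction yields $\astf(\nu)\subseteq\astf(\mu)$ whenever $\mu\le\nu$ in~$\astn$.

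Finally I would fix an unlimited hypernatural $H\in\astn\setminus\N$, which exists because the extension is proper (in an ultrapower $X^\N/\mathcal F$ one may take the class of the identity sequence). By the transferred nonemptiness, $\astf(H)\neq\emptyset$, so I choose some $x\in\astf(H)$. For every standard~$n$ we have $n<H$, whence $\astf(H)\subseteq\astf(n)=\asta_n$ and therefore $x\in\asta_n$. Thus $x$ is a common point of all the~$\asta_n$ with standard~$n$.

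The step carrying the real content is the passage to the unlimited index~$H$: evaluating the internal sequence beyond every standard stage is precisely what produces a single point lying in $\asta_n$ for all standard~$n$ simultaneously. It is essential that the conclusion be restricted to standard indices and not itself transferred, since $\bigcap_{\nu\in\astn}\astf(\nu)$ may well be empty, as the example $A_n=\{m\in\N\colon m\ge n\}$ shows. (Alternatively, one could note that the $\asta_n$ have the finite intersection property and invoke countable saturation of the ultrapower to intersect them; the argument above avoids appealing to saturation and uses only transfer together with the existence of one unlimited hypernatural.)
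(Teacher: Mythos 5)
Your proof is correct and takes essentially the same route as the paper's: regard the sequence as a function $f\colon\N\to\mathbb{P}$, pass to its star $\astf\colon\astn\to\astp$, fix an unlimited index $H$, and pick a point of $\astf(H)$, which lies in every $\asta_n$ for standard $n$ because $n<H$. The only cosmetic difference is that the paper transfers the full nestedness statement in one step (so that $\langle\astf(\nu)\colon\nu\in\astn\rangle$ is nested and nonempty by transfer alone), whereas you transfer the successor-step inclusion and then invoke internal induction; both are valid.
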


\begin{proof}
Let~$\mathbb {P}=\mathcal{P}(\R)$ be the set of all subsets of~$\R$.
Consider a sequence~$\langle A_n\in\mathbb{P}\colon n\in\N\rangle$
viewed as a function~$f\colon\N\to\mathbb P,\; n\mapsto A_n$.  By the
extension principle we have a function~$\astf\colon \astn\to \astp$.
Let~$B_n=\astf(n)$.  For each standard~$n$, we
have~$B_n=\asta_n\in\astp$.%
\footnote{The injective map~$\ast\colon\mathbb{P}\to\astp$ sends~$A_n$
  to~$\asta_n$.  For each standard natural~$n$ we have a symbol~$a_n$
  in the appropriate language (including at least the names for all
  subsets of~$\R$),
%
%
whose standard interpretation is~$A_n\in\mathbb P$.  Meanwhile the
nonstandard interpretation of~$a_n$ is the entity~$\asta_n\in\astp$.
The sequence~$\langle A_n \colon n\in\N\rangle$ in~$\mathbb P$ is the
standard interpretation of the symbol~$a=\langle a_n\rangle$.
Meanwhile, the nonstandard interpretation of the symbol~$a$ is
$\langle B_n \colon n\in\astn\rangle$ in~$\astp$.  In particular, one
has~$B_n=\asta_n$ for standard~$n$.}
For a nonstandard value of the index~$n=H$, the entity~$B_H\in\astp$
is by definition internal but is in general not the natural extension
of any subset of~$\R$.

If~$\langle A_n \rangle$ is a \emph{nested} decreasing sequence
in~$\mathbb{P}\setminus\{\varnothing\}$ then by transfer~$\langle
B_n:n\in\astn \rangle$ is nested in~$\astp\setminus\{\varnothing\}$.
Let~$H$ be a fixed nonstandard index.  Since~$n<H$ for each
standard~$n$, the set~$\asta_n\subseteq\astr$ includes~$B_H$.  Choose
any element~$c\in B_H$.  Then~$c$ is contained in~$\asta_n$ for each
standard~$n$:
\[
c\in \bigcap_{n\in\N} \asta_n
\]
as required.%
\footnote{The conclusion of non-empty intersection remains valid for
  any nested sequence of nonempty \emph{internal} sets, i.e., members
  of~$\astp$; see e.g., \cite[Theorem~11.10.1, p.\,138]{Go98} (the
  proof is more involved).}
\end{proof}

\begin{definition}
\label{d175}
Let~$I$ be a set.  A collection~$\mathcal{H}\subseteq \mathcal{P}(I)$
has the \emph{finite intersection property} if the intersection of
every nonempty finite subcollection of~$\mathcal{H}$ is nonempty,
i.e.,
\[
B_1 \cap\cdots\cap B_n \not=\varnothing \text{ for all } n\in\N \text{
  and all } B_1 , \ldots , B_n \in \mathcal{H}.
\]
\end{definition}

Then Theorem~\ref{t971} has the following equivalent formulation.

\begin{corollary}
[Countable Saturation]
\label{r1013}
If a family of subsets~$\{A_n\}_{n\in\N}$ has the finite intersection
property (see Definition~\textnormal{\ref{d175}}) then the
intersection 
$\bigcap_{n\in\N} \asta_n$ is nonempty.
\end{corollary}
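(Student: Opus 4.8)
The plan is to reduce the statement to Theorem~\ref{t971} by replacing the family $\{A_n\}_{n\in\N}$ with the nested sequence formed by its finite intersections. Concretely, I would set $B_n = A_1\cap\cdots\cap A_n$ for each $n\in\N$. Each $B_n$ is again a subset of~$\R$; the sequence $\langle B_n\rangle$ is decreasing, hence nested; and the finite intersection property guarantees that every $B_n$ is nonempty, since $B_n$ is exactly the intersection of a finite subcollection of the family. Thus $\langle B_n\colon n\in\N\rangle$ satisfies the hypotheses of Theorem~\ref{t971}.

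Applying Theorem~\ref{t971} to $\langle B_n\rangle$ then yields a point $c$ lying in $\astb_n$ for every standard~$n$, and the remaining task is to re-express $\astb_n$ in terms of the $\asta_k$. Here I would invoke the intersection analogue of Lemma~\ref{l1012}: starting from the tautology $(\forall y\in X)\,[\,y\in A\cap B\eqi (y\in A)\wedge(y\in B)\,]$ and applying upward transfer exactly as in the proof of Lemma~\ref{l1012}, one gets ${}^\ast(A\cap B)=\asta\cap\astb$, and by induction ${}^\ast(A_1\cap\cdots\cap A_n)=\asta_1\cap\cdots\cap\asta_n$ for each standard~$n$. Consequently $\astb_n=\asta_1\cap\cdots\cap\asta_n$, so for every standard~$m$ we have $c\in\astb_m\subseteq\asta_m$, whence $c\in\bigcap_{n\in\N}\asta_n$, as desired.

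To justify calling this an \emph{equivalent} formulation, I would observe that the reverse implication is immediate: a nested sequence of nonempty sets trivially has the finite intersection property, because any finite subcollection has as its intersection the smallest of its members, which is nonempty; and for such a sequence the conclusion $\bigcap_{n\in\N}\asta_n\neq\varnothing$ of the Corollary is precisely the conclusion of Theorem~\ref{t971}. Hence the two statements imply one another. The only point that is not pure bookkeeping is the passage from the union version of Lemma~\ref{l1012} to the intersection version, but this is no real obstacle: it needs nothing beyond the same transfer-and-induction argument with $\wedge$ in place of $\vee$. The genuine content of the Corollary thus lies entirely in Theorem~\ref{t971}, and everything here is a routine translation.
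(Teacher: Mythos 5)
Your proposal is correct and matches the paper's intent exactly: the paper states the Corollary as an ``equivalent formulation'' of Theorem~\ref{t971} without spelling out the reduction, and your argument---passing to the nested sequence of finite intersections $B_n=A_1\cap\cdots\cap A_n$, applying Theorem~\ref{t971}, and using the intersection analogue of Lemma~\ref{l1012} (same transfer-and-induction proof with $\wedge$ in place of $\vee$)---is precisely the standard bookkeeping the authors leave implicit. Your observation that the converse direction is trivial (a nested sequence of nonempty sets has the finite intersection property) correctly justifies the word ``equivalent.''
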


Recall that a topological space~$\ts$ is second-countable if its
topology admits a countable base.  Recall that a space is Lindel\"of
if every open cover includes a countable subcover.  A second countable
space is necessarily Lindel\"of (over ZF+ACC).  If~$\ts$ is a
separable metric space then~$\ts$ is second countable and hence
Lindel\"of.

A point~$y\in\astt$ is called \emph{nearstandard in~$\ts$} if~$y$ is
infinitely close to a standard point~$p\in \ts$, i.e., such that~$y$
is contained in the star~$\astu$ of every open neighborhood~$U$
of~$p$.  The intersection of all such~$\astu$ is called the
\emph{halo} of~$p$.  The relation~$x\simeq y$ holds if and only if for
all open sets~$O$,~$x\in\asto$ if and only if~$y\in\asto$.

\begin{theorem}
\label{t26}
Assume~$\ts$ is Lindel\"of.  Then the following two conditions are
equivalent:
\begin{enumerate}
\item
$\ts$ is compact (i.e., every open cover admits a finite subcover);
\item
every~$y\in\astt$ is nearstandard in~$\ts$.
\end{enumerate}
\end{theorem}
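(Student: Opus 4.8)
The plan is to prove the two implications separately, in each case transferring a finite or countable combinatorial fact about open sets and using Lemma~\ref{l1012} to commute the star operation with finite unions.

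For the implication $(2)\Rightarrow(1)$, I would argue by contradiction and exploit the Lindel\"of hypothesis to reduce to a \emph{countable} cover, so that Countable Saturation (Corollary~\ref{r1013}, equivalently Theorem~\ref{t971}) applies. Let $\mathcal C$ be an open cover of $\ts$; by Lindel\"ofness it has a countable subcover $\langle O_n\colon n\in\N\rangle$. Assuming toward a contradiction that no finite subcollection covers $\ts$, the tails $A_n=\ts\setminus(O_1\cup\cdots\cup O_n)$ form a nested sequence of nonempty subsets of $\ts$. By Theorem~\ref{t971} the intersection $\bigcap_{n\in\N}\asta_n$ contains a point $y$; by Lemma~\ref{l1012} applied to the set difference this forces $y\notin\asto_n$ for every standard $n$. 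On the other hand, hypothesis~(2) gives a standard $p\in\ts$ with $y\simeq p$, and since the $O_n$ cover $\ts$ there is a standard index $m$ with $p\in O_m$ (standard because $p$ and $\langle O_n\rangle$ are standard, so the least such $m$ is standard). As $O_m$ is an open neighborhood of $p$, nearstandardness yields $y\in\asto_m$, contradicting the previous line. Hence some finite subfamily covers $\ts$, so $\ts$ is compact.

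For the implication $(1)\Rightarrow(2)$, I would argue by contraposition. If $y\in\astt$ is \emph{not} nearstandard, then for each standard $p\in\ts$ there is an open neighborhood $U$ of $p$ with $y\notin\astu$. I would collect these into the definable family $\mathcal U=\{U\subseteq\ts\colon U\text{ open},\ y\notin\astu\}$, which covers every standard point of $\ts$. Passing to a standard family with the same standard members and invoking transfer promotes it to an open cover of all of $\ts$; compactness then extracts a finite subcover $U_1,\dots,U_k$. Transferring the finite identity $\ts=U_1\cup\cdots\cup U_k$ and applying Lemma~\ref{l1012} gives $\astt=\astu_1\cup\cdots\cup\astu_k$, so the point $y\in\astt$ lies in some $\astu_i$, contradicting $y\notin\astu_i$. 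Thus every $y\in\astt$ is nearstandard.

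The step I expect to be the main obstacle is making the cover argument in $(1)\Rightarrow(2)$ \emph{effective}, that is, carrying it out without the axiom of choice that the classical proof hides in selecting one neighborhood $U_p$ per point $p$. The remedy is to work with the whole definable family $\mathcal U$ rather than a choice function, and to replace it by a standard family with the same standard members, so that transfer upgrades ``covers every standard point'' to ``covers $\ts$''; the finite subcover and the concluding computation are then choice-free. In the converse direction the analogous role is played by the Lindel\"of hypothesis together with Countable Saturation: a single countable subcover replaces any appeal to choice, and Theorem~\ref{t971} supplies the common point within ZF (respectively ZF+ADC), which is exactly what keeps the whole equivalence inside the effective theories SPOT and SCOT. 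A secondary point to check carefully is that the indices and neighborhoods invoked as ``standard'' genuinely are, which is where the standardness predicate and transfer must be handled with some care.
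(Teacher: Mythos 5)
Your proof is correct and follows essentially the same route as the paper's: for $(1)\Rightarrow(2)$, the definable family $\mathcal{U}$ of open sets $U$ with $y\notin\astu$, a finite subcover, and Lemma~\ref{l1012}; for $(2)\Rightarrow(1)$, the Lindel\"of reduction to a countable cover plus countable saturation, where your nested tails $A_n$ with Theorem~\ref{t971} are interchangeable with the paper's complements with the finite intersection property and Corollary~\ref{r1013} (the paper states these are equivalent formulations).

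One remark on the step you single out as the main obstacle: replacing $\mathcal{U}$ by ``a standard family with the same standard members'' and invoking transfer is unnecessary here, and reflects a conflation of two frameworks. Theorem~\ref{t26} is stated in the extension view of Section~\ref{s2}, where $\ts$ is an ordinary topological space in the standard universe: every point of $\ts$ is standard, the family $\mathcal{U}$ is a genuine set by Separation in the ambient set theory (the condition $y\notin\astu$ is a legitimate formula there), and ``$\mathcal{U}$ covers every standard point'' already says ``$\mathcal{U}$ covers $\ts$'', so compactness applies directly, exactly as in the paper. The standardisation-plus-transfer move you describe is what is required in the internal, axiomatic setting of Section~\ref{s3} (Lemma~\ref{l32}, via Countable Standardisation), and the effectiveness concern you raise belongs to that setting as well: the present theorem lives in the ultrafilter-based extension framework, which the paper explicitly describes as non-effective, so no choice-avoidance is at stake in this particular proof.
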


\begin{proof}[Proof of~$(1)\Rightarrow(2)$]
Assume~$\ts$ is compact, and let~$y\in\astt$.  Let us show that~$y$ is
nearstandard in~$\ts$ (this direction does not require saturation).

Suppose on the contrary that~$y$ is not nearstandard in~$\ts$,
i.e.,~$y$ is not in the halo of any (standard) point~$p\in \ts$.  Then
we can form the open cover~$\mathcal{U}$ of~$T$ containing all open
sets~$U$ such that
\begin{equation}
\label{e1111}
y\notin \astu.  
\end{equation}
Since~$\ts$ is compact,~$\mathcal U$ includes a finite subcover
$U_{1},\ldots,U_{n}$.  Applying Lemma~\ref{l1012} to the finite
union~$\ts=U_{1}\cup\cdots\cup{}U_{n}$, we obtain
\[
\astt=\astu_{1} \cup\cdots\cup{}\astu_{n}.
\]
Hence~$y$ is in one of the~$\astu_{i}$,~$i=1,\ldots,n$,
contradicting~\eqref{e1111}.  The contradiction establishes that~$y$
is necessarily nearstandard in~$\ts$.
\end{proof}

\begin{proof}[Proof of~$(2)\Rightarrow(1)$]
This direction exploits saturation.  Assume each
$y\in\astt$ is nearstandard in~$\ts$.  Given an open cover~$\{U_a\}$
of~$\ts$, we need to find a finite subcover.  Since~$\ts$ is
Lindel\"of, we can assume that the cover is countable.

Suppose on the contrary that no finite subcollection of~$\{U_a\}$
covers~$\ts$.  Then the complements~$S_a$ of~$U_a$ form a countable
collection of (closed) sets~$\{S_a\}$ with the finite intersection
property.  Applying countable saturation (Corollary~\ref{r1013}) to
this countable family, we conclude that the intersection of
all~$\asts_a$ is non-empty.  Let~$y\in \bigcap_a \asts_a$.  By
assumption, there is a point~$p \in \ts$ such that
\begin{equation}
\label{e1112}
y \simeq p.
\end{equation}

Since~$\{U_a\}$ is a cover of~$\ts$, it contains a set~$U_b$ such
that~$p\in{}U_b$, and hence~$y\in \astu_b$ since~$U$ is open.
But~$y\in\asts_a$ for all~$a$, in particular~$y\in\asts_b$,
so~$y\not\in\astu_b$ by Lemma~\ref{l1012},
contradicting~\eqref{e1112}.  The contradiction establishes the
existence of a finite subcover.
\end{proof}

\section{Internal set theories}
\label{ist}

In this section we explain in what sense analysis with infinitesimals
does not require the axiom of choice any more than traditional
non-infinitesimal analysis, following \cite{21e}.  There are two
popular approaches to Robinson's nonstandard mathematics (including
analysis with infinitesimals):
\begin{enumerate}
\item
model-theoretic, and
\item
axiomatic/syntactic.
\end{enumerate}
For a survey of the various approaches see \cite{17f}.

The model-theoretic approach (including the construction of the
ultrapower) typically relies on strong forms of the axiom of
choice. The axiomatic/syntactic approach turns out to be more
economical in the use of foundational material, and exploits a richer
\st-$\in$-language, as explained below.

The traditional set-theoretic foundation for mathematics is
Zermelo--Fraenkel set theory (ZF).  The theory ZF is a set theory
formulated in the~$\in$-language.  Here ``$\in$" is the two-place
membership relation.  In ZF, all mathematical objects are built up
step-by-step starting from~$\emptyset$ and exploiting the one and only
relation~$\in$.

For instance, the inequality~$0<1$ is formalized as the membership
relation~$\emptyset\in\{\emptyset\}$, the inequality~$1<2$ is
formalized as the membership relation
$\{\emptyset\}\in\{\emptyset,\{\emptyset \}\}$, etc. Eventually ZF
enables the construction of the set of natural numbers~$\N$, the ring
of integers~$\Z$, the field of real numbers~$\R$, etc.

For the purposes of mathematical analysis, a set theory SPOT has been
developed in the more versatile \st-$\in$-language (its axioms are
given in Section~\ref{s4}).  Such a language exploits a predicate
{\st} in addition to the relation~$\in$.  Here ``\st" is the one-place
predicate \textbf{standard} so that \st($x$) is read ``$x$ is
standard".

\begin{theorem}
[\cite{21e}]
\label{conservative}
The theory {\rm SPOT} is a conservative extension of~{\rm ZF}.
\end{theorem}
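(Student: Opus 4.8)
The plan is to argue semantically, via the completeness theorem. It suffices to show that every model of ZF can be extended to a model of SPOT that preserves the truth of every pure $\in$-sentence, for then conservativity follows by contraposition: if an $\in$-sentence $\sigma$ is not a theorem of ZF, there is a model $M\models\text{ZF}+\neg\sigma$, and extending $M$ to a model $M^{\ast}\models\text{SPOT}$ agreeing with $M$ on $\in$-sentences yields $M^{\ast}\models\neg\sigma$, so that SPOT does not prove $\sigma$ either. Thus I would fix an arbitrary $M\models\text{ZF}$, designate (a copy of) $M$ as the class of \emph{standard} sets, and build an internal universe $M^{\ast}\supseteq M$ together with an interpretation of the predicate $\st$ whose extension is exactly $M$.

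The internal universe should be an $\in$-elementary extension $M\prec M^{\ast}$: this makes the Transfer axiom hold by construction, since Transfer says precisely that the standard elements form an elementary substructure for $\in$-formulas with standard parameters. The remaining SPOT axioms --- a bounded standardization scheme and the weak idealization/nontriviality principle --- must then be verified in the structure $(M^{\ast},M,\in)$. The heart of the matter, and the reason the target theory is ZF rather than ZFC, is to produce this elementary extension \emph{without} the axiom of choice, which the usual ultrapower smuggles in through a nonprincipal ultrafilter. The key point to exploit is that SPOT demands only a weak form of idealization: it does not require countable saturation, and it is exactly that stronger demand (embodied in the Countable Saturation of Corollary~\ref{r1013}) which forces the passage to SCOT over ZF${}+{}$ADC. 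Because only the weak principle is needed, the extension $M^{\ast}$ can be manufactured by a choice-free device --- for instance a forcing or Boolean-valued construction, in which the Boolean value of each ground-model $\in$-sentence coincides with its truth value in $M$, or a definable ultrapower adequate for the fragment of \L o\'s's theorem actually required to validate Transfer and the weak idealization. Checking that such a deliberately weakened construction still models the bounded standardization and idealization schemes of SPOT is where I expect the technical labor to concentrate.

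Finally, preservation of the $\in$-theory is routine: since $\st$ does not occur in $\sigma$, its truth value in the expansion is computed entirely in the $\in$-reduct $M^{\ast}$, and as $M\prec M^{\ast}$ the two structures agree on all $\in$-sentences, so $\neg\sigma$ persists. Assembling these pieces gives $\text{SPOT}\nvdash\sigma$ whenever $\text{ZF}\nvdash\sigma$, which is the asserted conservativity. I anticipate that the genuine obstacle is the choice-free validation of the nonstandard axioms; the elementarity argument and the packaging through completeness are comparatively mechanical. A purely syntactic alternative, modeled on Nelson's reduction algorithm for IST, could in principle replace the model-theoretic step, but it would have to be reworked so as not to rely on the strong standardization that choice-based reductions use freely.
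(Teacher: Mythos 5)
The survey you are reading gives no proof of this theorem: it is quoted from \cite{21e}, so your attempt has to be measured against the proof given there. Your overall plan is in fact the plan of \cite{21e}: argue semantically via completeness, take the standard sets to be a ground model $M\models{\rm ZF}$ (of the negation of the unprovable $\in$-sentence $\sigma$), build an $\in$-elementary extension $M^{\ast}$ choice-freely, interpret ${\rm st}$ as membership in the image of $M$, and read off conservativity from preservation of $\in$-sentences. The difficulty you flag --- doing this without choice --- is also the right one. But the proposal defers exactly the steps that constitute the theorem, and at the two places where you gesture at how the deferred work would go, the gestures are not quite right.

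Concretely, three things are missing or off. (i) The reduction must be to \emph{countable} models of ${\rm ZF}+\neg\sigma$ (available choice-freely from the Henkin proof of completeness); countability is what makes generic objects over $M$ constructible by enumerating dense sets, and without it even a proper elementary extension of $M$ is unavailable in a choice-free metatheory, since compactness for the elementary diagram of an uncountable structure needs choice. (ii) Your suggestion of ``a definable ultrapower adequate for the fragment of \L o\'s's theorem actually required to validate Transfer'' miscalculates where the weakness of SPOT helps: Transfer with standard parameters requires the embedding $M\to M^{\ast}$ to be \emph{fully} elementary for $\in$-formulas, i.e., full \L o\'s; what SPOT spares you is saturation/idealization, not \L o\'s. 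Moreover, without AC, \L o\'s fails in general even when an ultrafilter is available, because the existential-quantifier step needs a choice of witnesses. The device in \cite{21e} is a Spector-style \emph{extended ultrapower}: one forces to add a generic ultrafilter on $\mathcal{P}(\omega)^{M}$, and genericity, rather than choice, supplies the witnesses. (iii) Most seriously, the Standard Part axiom is never addressed. SP is not a consequence of elementarity plus properness of the extension --- compactness-style elementary extensions typically contain limited reals that are infinitely close to no standard real --- so it must be extracted from the specific construction, and in \cite{21e} this verification is a genuine part of the work. As it stands, your structure is only guaranteed to satisfy T and O, which falls short of SPOT; identifying where the labor lies is not the same as performing it.
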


This means that every statement in the~$\in$-language provable in SPOT
is provable already in ZF.  In particular, the axiom of choice and the
existence of non-principal ultrafilters are not provable is SPOT,
because they are not provable in ZF.  Thus SPOT does not require any
additional foundational commitments beyond ZF.

\begin{remark}
The Separation Axiom of ZF asserts, roughly, that for any
$\in$-formula~$\phi$ and any set~$A$, there exists a set~$S$ such
that~$x\in S$ if and only if~$x \in A \,\wedge\, \phi(x)$ is true.
This remains valid in SPOT which is a conservative extension of ZF.\,
But Separation does not apply to formulas involving the new predicate
\st.  Specifically, Separation does not apply to the predicate {\st}
itself.
\end{remark}

\begin{example}
The collection of standard natural numbers is not a set that could be
described as~``$\{x\in\N:\st(x)\}$.''  Such external collections can
be viewed informally as classes defined by the corresponding
predicate.  Thus, in \cite{21e} one uses the \emph{dashed curly brace}
notation~$\pmbaa n\in\N:\st(n)\pmbbb$ for such a class, when
convenient.  Writing~$k\in\pmbaa n\in\N:\st(n)\pmbbb$ is equivalent to
writing ``$\st(k)$ (is true)''.  In many cases the passage from a
predicate to a set turns out to be unnecessary: as mentioned in the
introduction, in SCOT (conservative over ZF+ADC) one can give an
infinitesimal construction of the Lebesgue measure; in BST (a
modification of Nelson's IST, possessing better meta-mathematical
properties), the Loeb measure can be handled, as well; see \cite{23b}.
\end{example}

\begin{remark}[Sources in Leibniz]
\label{r34}
The predicate {\st} formalizes the distinction already found in
Leibniz between assignable and inassign\-able numbers.  An
inassignable (nonstandard) natural number~$\mu$ 
%
%
is greater than every assignable (standard) natural number.  One of
the formulations of Leibniz's \emph{Law of Continuity} posits that
``the rules of the finite are found to succeed in the infinite and
vice versa'' (cf.\;Robinson \cite[p.\;266]{Ro66}), formalized by
Robinson's transfer principle.  See further in \cite{21a}, \cite{22a},
and \cite{23c}.
\end{remark}

If~$\mu\in\N$ is a nonstandard integer, then its
reciprocal~$\varepsilon=\frac1\mu\in\R$ is a positive infinitesimal
(smaller than every positive standard real). Such an~$\varepsilon$ is
a nonstandard real number.

A real number smaller in absolute value than some standard real number
is called \emph{limited}, and otherwise \emph{unlimited}.  SPOT proves
that every nonstandard natural number is unlimited
\cite[Lemma~2.1]{21e}.

The theory SPOT enables one to take the standard part, or shadow, of
every limited real number~$r$, denoted~$\sh(r)$.  This means that the
difference~$r-\sh(r)$ is infinitesimal.

The derivative of the standard function~$f(x)$ is then\,
$\sh\big(\frac{f(x+\varepsilon)-f(x)}{\varepsilon}\big)$ for nonzero
infinitesimal~$\varepsilon$.  In more detail, we have the following.

\begin{definition}
Let~$f$ be a standard function, and~$x$ a standard point.  A standard
number~$L$ is the \emph{slope} of~$f$ at~$x$ if
\begin{equation}
\label{e31}
(\forall^{in}\varepsilon)(\exists^{in}\lambda)\,
f(x+\varepsilon)-f(x)=(L+\lambda)\varepsilon.
\end{equation}
where~$\forall^{in}$ and~$\exists^{in}$ denote quantification over
infinitesimals.%
\footnote{For further details, see note~\ref{f3}.}
\end{definition}

The Riemann integral of~$f$ over~$[a,b]$ (with~$f, a, b$ standard), when
it exists, is the shadow of the sum~$\sum_{i=1}^\mu f
(x_i)\varepsilon$ as~$i$ runs from 1 to~$\mu$, where the~$x_i$ are the
partition points of an equal partition of~$[a,b]$ into~$\mu$
subintervals.  For a fuller treatment see \cite[Example~2.8]{21e}.

The (external) relation of infinite proximity~$x\simeq y$
for~$x,y\in\R$ is defined by requiring~$x-y$ to be infinitesimal.

\section{The axioms of SPOT and SCOT}
\label{s4}

We will now present the axioms that enable this effective approach
(conservative over ZF) to analysis with infinitesimals.

\subsection{Axioms of the theory SPOT}

SPOT is a subtheory of axiomatic (syntactic) theories developed in the
1970s independently by Hrbacek \cite{Hr78} and Nelson \cite{nelson}.
In addition to the axioms of ZF, SPOT has three axioms: Standard
Part, Nontriviality, and Transfer (for the historical origins of the
latter see Remark~\ref{r34}):

\begin{quote}
T (Transfer) Let~$\phi$ be an~$\in$-formula with standard parameters.
Then~$\forall^\st x \,\phi (x) \to \forall x \, \phi(x)$.
\end{quote}

\begin{quote}
O (Nontriviality)~$\exists\nu\in\N\,\forall^\st n\in\N\,(n\not=\nu)$.
\end{quote}

\begin{quote}
SP (Standard Part) Every limited real is infinitely close to a
standard real.
\end{quote}

An equivalent existential version of the Transfer axiom is~$\exists
x\; \phi(x) \implies \exists^{\st} x\; \phi(x)$, for~$\in$-formulas
$\phi~$ with standard parameters.

Nontriviality asserts simply that there exists a nonstandard integer.

An equivalent version of Standard Part is the following.

\begin{quote}
SP$'$ (Standard Part)
\end{quote}
\[
\forall A\subseteq\N \, \exists^\st B\subseteq\N \, \forall^\st n\in\N
\, (n\in B\leftrightarrow n\in A).
\]

\begin{remark}
The latter formulation can be motivated intuitively as follows.  Given
a real number~$0<r<1$, consider its base-2 decimal expansion.  Let~$A$
be the set of ranks where digit~$1$ appears.  The set~$A$ is not
standard if~$r$ is not standard.  The corresponding standard set~$B$
(whose existence is postulated by SP$'$) can be thought of as the set
of nonzero digits of the shadow~$\sh(r)$ of~$r$.  The fact that~$r$
and~$\sh(r)$ are infinitely close reflects the fact that~$A$ and~$B$
agree at all limited ranks.  The detailed argument is a bit more
technical because binary representation (like decimal representation)
is not unique; see \cite[Lemma~2.4]{21e}.
\end{remark}

In the model-theoretic frameworks one has three categories of sets:
sets that are natural extensions of, say, subsets of~$\R$, more
general internal sets, as well as external sets.  In the axiomatic
frameworks, the standard and nonstandard sets correspond to the
natural extensions and the internal sets, whereas there are no
external sets.

\subsection{Additional principles}

Recall that SPOT proves that standard integers are an initial segment
of~$\N$ \cite[Lemma~2.1]{21e}.

\begin{lemma} 
[Countable Idealisation] 
\label{countideal} 
Let~$\phi$ be an~$\in$-formula with arbitrary parameters.  The theory
{\rm SPOT} proves the following:
\[
\forall^\st n \in \N\;\exists x\; \forall m \in \N\; (m \le n
\rightarrow \phi(m,x)) \; \eqi \; \exists x \; \forall^\st n \in \N \;
\phi(n,x).
\]
\end{lemma}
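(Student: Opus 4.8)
The plan is to prove the two implications of the equivalence separately. The right-to-left implication is essentially immediate, so the real content lies in the left-to-right direction, which is where the ``idealisation'' happens. I will lean on three facts already recorded in the excerpt: that the standard naturals form an initial segment of~$\N$ (so any~$m \le n$ with~$n$ standard is itself standard), that every nonstandard natural is unlimited, i.e.\ larger than every standard natural, and---crucially---that the collection~$\{n \in \N : \st(n)\}$ is not a set.

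For the easy direction, assume there is an~$x$ with~$\forall^\st n\,\phi(n,x)$, and fix a standard~$n$. Since the standard naturals are an initial segment, every~$m \le n$ is standard, so~$\phi(m,x)$ holds for all such~$m$; hence this same~$x$ witnesses~$\exists x\,\forall m \le n\,\phi(m,x)$. As~$n$ was an arbitrary standard natural, the left-hand side follows.

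For the main direction I introduce the set
\[
A = \{\, n \in \N : \exists x\; \forall m \in \N\,(m \le n \rightarrow \phi(m,x)) \,\}.
\]
The point most easily overlooked is that the displayed condition is an~$\in$-formula---its only extra ingredients are the parameters of~$\phi$, which are sets even if they happen to be nonstandard---so ordinary Separation of ZF produces~$A$ as a genuine set, notwithstanding that Separation is \emph{not} available for formulas mentioning~$\st$. By construction, the left-hand side of the equivalence says exactly that every standard natural belongs to~$A$, i.e.\ that~$\{n \in \N : \st(n)\} \subseteq A$.

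The engine of the proof is then an overspill observation. Since~$A$ is a set while~$\{n \in \N : \st(n)\}$ is not, the inclusion above must be proper; thus there exists a \emph{nonstandard} (hence unlimited)~$K \in A$. Fixing a witness~$x$ for~$K \in A$ gives~$\phi(m,x)$ for every~$m \le K$, and because~$K$ is unlimited every standard~$n$ satisfies~$n \le K$, so~$\phi(n,x)$ holds for all standard~$n$, which is the right-hand side. I expect the main obstacle to be conceptual rather than computational: one must be careful both that~$A$ is legitimately formed by Separation in spite of possibly nonstandard parameters, and that the non-sethood of the standard naturals is precisely what upgrades ``$A$ contains every standard~$n$'' to ``$A$ contains an unlimited~$K$.'' This overspill step is where Nontriviality does its work, converting the family of witnesses indexed by standard~$n$ into a single uniform witness.
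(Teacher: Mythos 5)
Your proof is correct, and it follows the same route as the source on which the paper relies: the paper gives no proof of Lemma~\ref{countideal}, deferring instead to \cite[Lemma~2.2]{21e}, and the argument there is precisely your overspill argument --- form the set $A=\{n\in\N : \exists x\,\forall m\in\N\,(m\le n\rightarrow\phi(m,x))\}$ by $\in$-Separation (legitimate despite arbitrary parameters, since the formula does not mention $\st$), note that the left-hand side says $A$ contains every standard natural, and conclude that $A$ contains an unlimited $K$ whose witness $x$ then serves all standard $n$ simultaneously.

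The one fact you invoke as a black box --- that $\{n\in\N:\st(n)\}$ is not a set --- is asserted but not proved in the paper; for a fully self-contained derivation in SPOT you should add the one-line argument: if a set $S$ equaled this collection, then $0\in S$, and $S$ is closed under successor because existential Transfer applied to the $\in$-formula $x=n+1$ (standard parameter $n$) shows $n+1$ is standard whenever $n$ is, so $S=\N$ by induction, contradicting Nontriviality together with \cite[Lemma~2.1]{21e}.
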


This is proved in \cite[Lemma~2.2]{21e}.  One could elucidate
Countable Idealization by means of an equivalent version with
countable~$A$ in words as follows.  If for every standard finite
subset~$a \subseteq A$ there is some~$x$ such that for all~$z \in a$,
one has $\phi(z,x)$, then there is a single~$x$ such that~$\phi(z,x)$
holds for all standard~$z \in A$ simultaneously (the converse is
obvious given that all elements of a standard finite set are standard,
which is a consequence of \cite[Lemma 2.1]{21e}).  This is analogous
to saturation (see Corollary~\ref{r1013}).

\begin{definition}  
SN is the standardisation principle for \st-$\in$-formulas with no
parameters.  Namely, let~$\phi(v)$ be an~$\st$-$\in$-formula with no
parameters.  Then
\begin{equation}
\label{e41}
\forall^\st\!A\;\exists^\st S\;\forall^\st x\;(x\in S \eqi x \in
A\,\wedge\,\phi(x)).
\end{equation}
\end{definition}

It is proved in \cite[Lemma~6.1]{21e} that SN is equivalent to
standardisation for formulas with only standard parameters.  

Although separation does not hold, SN is a kind of
\emph{approximation} to it in the following sense.  The standard
elements of~$S$ (but not all elements) are exactly those for which
$\phi(x)$ holds.  Note also that the assumption that all parameters
are standard is necessary to maintain conservativity over ZF, because
otherwise one could prove the existence of nonprincipal ultrafilters
(see \cite{21e}).

Note that SPOT+SN is also conservative over ZF \cite[Theorem B,
  p.\;4]{21e}.  The axiom SN enables one to give a simple
infinitesimal definition of the derivative function conservatively
over ZF.%
\footnote{\label{f3}To dot the i's, let~$\phi_f(x,L)$ be the formula
  of \eqref{e31} depending on the standard parameter~$f$, a
  real-valued function.  Let~$A=\R^2$ in \eqref{e41}.  Then passing
  from~$f$ to ~$f'$ is enabled by the following consequence of
  \eqref{e41} containing only standard parameters:\;~$\exists^\st f'
  \; \forall^\st (x,L) \; \big( (x,L)\in f' \eqi (x,L)\in \R^2 \wedge
  \phi_f(x,L) \big)$\; where~$f'$ is thought of as its graph in the
  plane.}

SCOT incorporates the following choice-type axiom CC (which is a
strengthening of SP); see \cite[Section 3, p.\,10]{21e}.

\begin{definition}
\label{d44}
(CC) Let~$\phi (u,v)$ be an~$\st$-$\in$-formula with arbitrary
parameters. Then
\[
\forall^{\st} n \in \N\; \exists x\; \phi(n,x) \;\longrightarrow\;
\exists f\, (f \text{ is a function} \,\wedge\, \forall^{\st} n \in
\N\; \phi(n, f(n)).
\]
\end{definition}

The following definition was given in \cite[p.\,10]{21e}.

\begin{definition}
SCOT is the theory SPOT+ADC+SN+CC.
\end{definition}

SCOT (in fact, its subtheory SPOT+CC) also proves the following
statement SC \cite[Lemma~3.1]{21e}.

\begin{definition}
\label{d45}
SC (Countable Standardisation) Let~$\psi(v)$ be an~$\st$-$\in$-formula
with arbitrary parameters.  Then
\[
\exists^{\st} S\; \forall^{\st} n \; (n \in S \eqi n \in \N
\,\wedge\, \psi(n)).
\]
\end{definition}

\section{Compactness in internal set theories}
\label{s3}

In this section, we use infinitesimals to deal with compactness
conservatively over ZF or ZF+ADC, as indicated below (the traditional
extension view was already elaborated in Section~\ref{s2}).

Let~$\ts$ be a standard topological space.  A point~$x \in \ts$ is
\emph{nearstandard in~$\ts$} if there is a standard~$p\in \ts$ such
that~$p \in O$ implies~$x \in O$ for every standard open set~$O$ (in
other words,~$x$ is in the \emph{halo} of~$p$.)

\begin{lemma}
\label{l31}
Assume~$\ts$ is a standard Lindel\"of space.  If every~$x$ in~$\ts$ is nearstandard in
$\ts$ then~$\ts$ is compact.
\end{lemma}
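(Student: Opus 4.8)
The plan is to mimic the $(2)\Rightarrow(1)$ direction of Theorem~\ref{t26}, but to replace the appeal to countable saturation by Countable Idealisation (Lemma~\ref{countideal}), so that the entire argument stays inside SPOT and remains conservative over ZF.

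First I would reduce to a standard countable cover. To prove that $\ts$ is compact it suffices, by Transfer (since $\ts$ is standard), to produce a finite subcover of an arbitrary \emph{standard} open cover. Given such a cover, the standard Lindel\"of property of $\ts$ yields a countable subcover, and by Transfer this subcover may be taken standard; I would enumerate it as $\langle U_n : n\in\N\rangle$ with the enumeration standard, so that $U_n$ is a standard open set whenever $n$ is standard.

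Next I would argue by contradiction, assuming that no finite subcollection covers $\ts$. Here the formula to feed into Countable Idealisation is the $\in$-formula $\phi(m,x)$ given by $x\notin U_m$. For each standard $n$, the failure of $U_0\cup\cdots\cup U_n$ to cover $\ts$ (a statement with standard parameters) provides, via Transfer, a witness $x$ with $\forall m\le n\,(x\notin U_m)$; thus the left-hand side of Countable Idealisation holds. Its right-hand side then furnishes a single point $x\in\ts$ with $x\notin U_n$ for every standard $n$. Finally I would invoke the hypothesis: this $x$ is nearstandard, so it lies in the halo of some standard $p\in\ts$. Since $\langle U_n\rangle$ covers $\ts$ and $p$ is standard, the existential form of Transfer yields a \emph{standard} index $n_0$ with $p\in U_{n_0}$; as $U_{n_0}$ is then a standard open neighbourhood of $p$, membership in the halo forces $x\in U_{n_0}$, contradicting $x\notin U_n$ for all standard $n$. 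The contradiction produces the required finite subcover.

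The main obstacle I anticipate is the bookkeeping of standardness conditions rather than any deep difficulty: one must check that the reduction to a standard countable cover is legitimate, and, above all, that the index $n_0$ produced by Transfer is standard so that $U_{n_0}$ genuinely qualifies as a standard open set in the definition of halo. The role played by saturation in Theorem~\ref{t26} is taken over here precisely by Countable Idealisation, which is the crux of the effective, choice-free version of the argument.
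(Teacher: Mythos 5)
Your proposal is correct and follows essentially the same route as the paper's proof: both reduce to a standard countable open cover (Lindel\"of plus downward transfer), apply Countable Idealisation to the formula $x\notin U_m$ to produce a point avoiding all standard members of the cover, and then contradict nearstandardness by using existential transfer to find a standard member of the cover containing the standard point $p$. The only difference is presentational---the paper states the argument as a contrapositive, while you frame it as a reduction followed by a proof by contradiction.
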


\begin{proof}
Suppose~$\ts$ is not compact.  By downward transfer, there is a
standard countable cover~$\mathcal U$ of~$\ts$ by open sets such that
for every (standard) finite~$k$-tuple~$O_1,\ldots,O_k\in\mathcal U$
there is a~$p\in \ts\setminus\bigcup_{1\leq i\leq k}O_i$.
%
%
By Countable Idealisation with the standard
parameters~$\ts$,~$\mathcal U$, there is~$x \in \ts$ such that~$x
\not\in O$ for any standard~$O \in \mathcal U$.  Such an~$x$ is not
nearstandard in~$\ts$, because if~$x$ were in the halo of some
standard~$p \in \ts$, we would have a standard~$O \in \mathcal U$ such
that~$p \in O$ ($\mathcal U$ is a cover) and hence~$x \in O$, a
contradiction.%
\footnote{An analogous proof goes through for arbitrary standard
  topological spaces if one has \emph{full} idealisation (with
  standard parameters) such as in the theory BSPT$'$ \cite{21e}, which
  is still conservative over ZF (unfortunately it is not known whether
  SN can be added to it conservatively over ZF).  Note that BSPT$'$
  proves the existence of a finite set containing all standard reals
  \cite[p.\,10]{21e}.  Such sets are used in Benci's approach to
  measure theory.}
%
%
%
\end{proof}

\begin{lemma}
\label{l32}
Assume~$\ts$ is a standard second countable space.  If~$\ts$ is compact then every~$x
\in \ts$ is nearstandard in~$\ts$.
\end{lemma}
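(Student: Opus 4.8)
The plan is to argue by contradiction, following the same strategy as the implication $(1)\Rightarrow(2)$ of Theorem~\ref{t26}, but adapted to the internal framework, where $\ts$ is a standard set possibly containing nonstandard points and where no external sets are available. I would suppose that some $x \in \ts$ fails to be nearstandard in $\ts$, and then aim to manufacture a \emph{standard} open cover of $\ts$ to which compactness can be applied (via Transfer), in such a way that the resulting finite subcover provably cannot contain $x$, forcing the absurdity $x \notin \ts$.

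First I would fix, by Transfer, a standard countable base $\langle B_n \colon n \in \N\rangle$ for the topology of $\ts$; here second countability, rather than mere Lindel\"ofness, is what makes the base available as a single standard sequence. Unwinding the definition of nearstandard from Section~\ref{s3}, the assumption that $x$ is not nearstandard means that for every standard $p \in \ts$ there is a standard open $O \ni p$ with $x \notin O$. Shrinking $O$ to a basic set by Transfer (the statement ``$\exists m\,(p\in B_m \wedge B_m\subseteq O)$'' has standard parameters), I obtain a standard $n$ with $p \in B_n$ and $x \notin B_n$. Thus the basic sets avoiding $x$ already cover every standard point of $\ts$.

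The main obstacle is that the natural candidate cover $\{B_n : x \notin B_n\}$ is defined using the nonstandard parameter $x$, so it is not standard and compactness cannot be transferred to it directly. To get around this I would form the genuine set $I = \{n \in \N : x \notin B_n\}$ by Separation, and then invoke the Standard Part axiom in its form SP$'$ to obtain a \emph{standard} $J \subseteq \N$ with $n \in J \leftrightarrow n \in I$ for every standard $n$; equivalently, $n \in J \leftrightarrow x \notin B_n$ for standard $n$. The family $\mathcal W = \{B_n : n \in J\}$ is then standard. By the previous paragraph every standard $p$ lies in $B_n$ for some standard $n \in J$, so $\forall^{\st} p\, \exists n\,(n \in J \wedge p \in B_n)$ holds with standard parameters, and Transfer upgrades this to $\forall p$, showing that $\mathcal W$ is an honest open cover of all of $\ts$.

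Finally I would apply compactness. Since $\mathcal W$ is a standard cover of the standard compact space $\ts$, the existence of a finite subcover is a statement with standard parameters, so by the existential form of Transfer there is a \emph{standard} finite $F \subseteq J$ with $\ts = \bigcup_{n \in F} B_n$. Every element of a standard finite set is standard, so each $n \in F$ is standard and hence satisfies $x \notin B_n$ by the defining property of $J$. Therefore $x \notin \bigcup_{n \in F} B_n = \ts$, contradicting $x \in \ts$. The only non-ZF ingredients used are Transfer and SP$'$, both axioms of SPOT, so the argument stays conservative over ZF and requires neither idealisation nor any form of choice, consistent with the fact that this is the ``easy'' direction.
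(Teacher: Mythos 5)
Your proof is correct, but it takes a genuinely different route from the paper's at the decisive step, and the difference matters foundationally. The paper handles the external collection of basic sets avoiding the non-nearstandard point $\mu$ in one stroke, forming $\mathcal U = {}^\st\{O\in\mathcal B : \mu\not\in O\}$ by SC (Countable Standardisation, Definition~\ref{d45}), explicitly because SC tolerates the nonstandard parameter $\mu$; since SC is derived from CC, the paper's proof lives in SCOT, and accordingly the paper only claims the equivalence of the two notions of compactness for second countable spaces \emph{in SCOT}. You instead observe that the defining condition ``$x\notin B_n$'' is an $\in$-formula, so the index set $I=\{n\in\N : x\notin B_n\}$ is an honest internal set by ZF Separation (the caveat about Separation failing concerns only formulas involving the predicate $\st$), and you then standardise $I$ using SP$'$, which is an axiom of SPOT; the reduction to a subset of $\N$ via the enumeration of the base is precisely what makes SP$'$ applicable. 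The remainder of your argument --- Transfer to see that the standard family covers $\ts$, existential Transfer to extract a \emph{standard} finite subcover, and the fact that all elements of a standard finite set are standard (a consequence of \cite[Lemma~2.1]{21e}, as noted in Section~\ref{s4}) to conclude $x\notin\ts$ --- has the same skeleton as the paper's proof. What your route buys is economy: Lemma~\ref{l32}, and with it the equivalence of the two characterisations of compactness for standard second countable spaces (Lemma~\ref{l31} being already a SPOT argument via Countable Idealisation), holds in SPOT, with no appeal to CC or any other choice-like principle. What the paper's route buys is brevity and generality of method: SC standardises arbitrary $\st$-$\in$-conditions with arbitrary parameters, so the same one-line move works even when the defining condition is genuinely external, whereas your trick is tied to conditions that are internal and indexed by a countable standard set.
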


\begin{proof}
Suppose~$\mu \in \ts$ is not nearstandard in~$\ts$.  Then for every
standard~$p\in \ts$ there is a standard open set~$O$ such that~$p \in
O$ and~$\mu\not\in O$.  Let~$\mathcal B$ be a standard countable base
for the topology of~$\ts$, and let
\[
\mathcal U = \, {}^\st\{O\in \mathcal{B} : \mu \not\in O\}.
\]
This set is obtained by SC (Countable Standardisation, see
Definition~\ref{d45}) with a nonstandard parameter (namely,~$\mu$),
available in SCOT \cite[Lemma~3.1]{21e}.  By the above and
transfer,~$\mathcal U$ is a standard open cover of~$\ts$.  If~$\ts$
were compact,~$\mathcal U$ would have, by transfer, a standard finite
open subcover~$O_1,\ldots,O_k$.  Then~$\mu\in O_i$ for some~$1\leq
i\leq k$, contradicting the definition of~$\mathcal U$.
\end{proof}

%
%

Since second countable implies Lindel\"of,
%
%
we have the equivalence of the two definitions of compactness, for
second-countable spaces in SCOT.

\section{Continuity}
\label{s6}

Based on the results of Section~\ref{s3}, the following can be proved
conservatively over ZF+ADC using infinitesimals.  We will first
discuss continuity over SPOT.

In this section, $f$ is a standard map between standard topological spaces.  $f$ is
said to be S-\emph{continuous at}~$c$ if whenever~$x\simeq c$, one
has~$f(x)\simeq f(c)$.

\begin{lemma}
If a standard map~$f$ from a first countable topological space into a
topological space is S-continuous at a standard point~$c$ then~$f$ is
continuous at~$c$.
\end{lemma}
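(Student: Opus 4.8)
The plan is to prove the contrapositive, reducing the topological definition of continuity at $c$ to a condition on the halo of $c$ by exploiting first countability, and then extracting a single offending point in that halo via Countable Idealisation (Lemma~\ref{countideal}).

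First I would record the standard neighbourhood data at $c$. Since $c$ is a standard point of a standard first countable space, the $\in$-statement ``$c$ has a countable neighbourhood base'' is true, and by Transfer a \emph{standard} such base exists; replacing its members by their successive finite intersections (a standard operation) I may assume a standard \emph{decreasing} base $U_0 \supseteq U_1 \supseteq \cdots$ at $c$. The payoff of this step is a halo characterisation: because $\{U_n\}$ is a standard base, every standard open $O \ni c$ contains some $U_n$, hence by the existential form of Transfer some $U_n$ with $n$ standard; consequently $x \simeq c$ (that is, $x$ lies in the halo of $c$) if and only if $x \in U_n$ for every standard $n$. This replaces the possibly uncountable intersection defining the halo by a countable nested family, which is exactly what makes Countable Idealisation applicable.

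Next I would set up the contradiction. Failure of continuity of $f$ at $c$ is an $\in$-formula with the standard parameters $f$ and $c$, so by the existential form of Transfer it is witnessed by a \emph{standard} open $V \ni f(c)$ such that no open neighbourhood of $c$ maps into $V$; in particular $f(U_n) \not\subseteq V$ for every standard $n$, so for each standard $n$ there is an $x$ with $x \in U_n$ and $f(x) \notin V$. Taking $\phi(m,x)$ to be the $\in$-formula $x \in U_m \,\wedge\, f(x) \notin V$, nestedness gives $\forall m \le n\,\phi(m,x) \eqi (x \in U_n \wedge f(x) \notin V)$, so the left-hand side of Countable Idealisation holds. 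Its right-hand side then yields a single $x$ with $f(x) \notin V$ and $x \in U_n$ for all standard $n$, i.e.\ $x \simeq c$ by the halo characterisation. Applying S-continuity gives $f(x) \simeq f(c)$, whence $f(x) \in V$ since $V$ is a standard open neighbourhood of $f(c)$, contradicting $f(x) \notin V$.

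I expect the main obstacle to be conceptual rather than computational: recognising that first countability is precisely the hypothesis that turns the halo of $c$ into a \emph{countable} condition, so that the countable (rather than full) idealisation available in SPOT already suffices, with no appeal to choice. The only routine care is in keeping the neighbourhood base standard and decreasing, so that ``$\forall m \le n$'' collapses to membership in $U_n$, and in invoking Transfer in the correct existential or universal direction at each reduction.
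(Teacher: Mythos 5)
Your proof is correct and follows essentially the same route as the paper's: assume continuity fails, extract by Transfer a standard open neighbourhood $V$ of $f(c)$ witnessing the failure, then use Countable Idealisation over the standard countable base at $c$ to produce a single point $x$ in the halo of $c$ with $f(x)\notin V$, contradicting S-continuity. The only difference is cosmetic: you pre-process the base into a standard nested sequence so as to invoke Lemma~\ref{countideal} in its literal $m\le n$ form, whereas the paper applies the equivalent finite-intersection version of Countable Idealisation directly to the base $\mathcal{B}_c$.
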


\begin{proof}
Let~$\mathcal{B}_c$ be a standard countable base of open neighborhoods
of~$c$.  Assume that~$f$ is not continuous at~$c$.  Then there is a
standard open neighborhood~$U$ of~$f(c)$ such that for every
(standard) finite~$O_1,\ldots,O_k \in \mathcal{B}_c$ there is~$x \in
\bigcap_{1\le i \le k} O_i$ with~$f(x) \notin U$.  By Countable
Idealization there is~$x$ such that~$x \in O$ holds for all
standard~$O \in \mathcal{B}_c$ and~$f(x) \notin U$. Then~$x \simeq c$
and~$f(x) \not \simeq f(c)$, a contradiction.
\end{proof}

\begin{lemma}
\label{l62b}
If a standard function~$f$ is continuous at a standard point~$c$
then~$f$ is S-continuous at~$c$.
\end{lemma}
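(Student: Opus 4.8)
The plan is to unwind both definitions and bridge them with the existential form of Transfer. Fix a point $x$ with $x\simeq c$; I must show $f(x)\simeq f(c)$. Since $f$ and $c$ are standard, the value $f(c)$ is standard as well (apply existential Transfer to $\exists y\,(y=f(c))$, whose only parameters are the standard $f$ and $c$). Hence, by the definition of the halo, it suffices to prove that $f(x)\in U$ for every \emph{standard} open set $U$ containing $f(c)$.

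So I would fix such a standard $U$. Ordinary continuity of $f$ at $c$ is the (true) $\in$-statement asserting that for every open neighborhood of $f(c)$ there is an open neighborhood $V$ of $c$ with $f(V)\subseteq U$; instantiating this universally quantified statement at our particular $U$ yields some open $V$ with $c\in V$ and $f(V)\subseteq U$. The key step is that this $V$ may be taken standard: the formula ``$V$ is open $\wedge\; c\in V\;\wedge\; f(V)\subseteq U$'' is an $\in$-formula whose parameters $c,f,U$ are all standard, so by the existential version of Transfer (Section~\ref{s4}) there is a standard such $V$.

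Now I invoke the hypothesis $x\simeq c$. Because $V$ is a standard open set containing the standard point $c$, the defining property of the halo gives $x\in V$, whence $f(x)\in f(V)\subseteq U$. As $U$ was an arbitrary standard open neighborhood of $f(c)$, this shows that $f(x)$ lies in the halo of $f(c)$, i.e.\ $f(x)\simeq f(c)$, which is exactly S-continuity at $c$.

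The main obstacle is the passage to a \emph{standard} neighborhood $V$. The relation $\simeq$ only tests membership against standard open sets, so the neighborhood produced by raw continuity would be useless to us if it happened to be nonstandard; Transfer is precisely what converts it into a standard witness on which the halo hypothesis can bite. This is also why no countability hypothesis on the spaces is required here, in contrast with the preceding lemma, whose converse direction relied on Countable Idealisation.
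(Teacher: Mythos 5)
Your proof is correct and is essentially the same argument as the paper's: the paper merely phrases it as a proof by contradiction (assume $f(x)\not\simeq f(c)$, so $f(x)\notin U$ for some standard neighborhood $U$ of $f(c)$), while its key step is exactly your Transfer move—continuity plus existential Transfer over the standard parameters $f,c,U$ yields a \emph{standard} open $O\ni c$ with $f(O)\subseteq U$, and then $x\simeq c$ gives $x\in O$, hence $f(x)\in U$. Your added remark that $f(c)$ is standard (which the paper leaves implicit) is a correct and harmless piece of bookkeeping.
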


\begin{proof}
Assume that~$f$ is not S-continuous at~$c$.  Then there is ~$x \simeq
c$ for which~$f(x) \not \simeq f(c)$, i.e.,~$f(x) \notin U$ holds for
some standard neighborhood~$U$ of~$f(c)$.  By continuity of~$f$ there
is a standard open neighborhood~$O$ of~$c$ such that~$z \in O$
implies~$f(z) \in U$.  As~$x \simeq c$, we have~$x \in O$ and
hence~$f(x) \in U$, a contradiction.
\end{proof}

\section{Continuous image of compacts}
\label{s7}

We prove the following well-known result in SCOT.

\begin{theorem}
Let~$f: \ts\to Y$ be a continuous map between second-countable
topological spaces.  Let~$E\subseteq \ts$ be compact.  Then $f(E)$ is
compact.
\end{theorem}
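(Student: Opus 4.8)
The plan is to reduce the theorem to the nonstandard characterizations of compactness already established in Lemmas~\ref{l31} and \ref{l32}, combined with the S-continuity results of Section~\ref{s6}. The strategy is to work with the image space $f(E)$ and verify, using infinitesimals, that every point of $f(E)$ is nearstandard in $f(E)$. Since $f(E)$ is a subspace of the second-countable space $Y$, it is itself second countable, hence Lindel\"of (over ZF+ACC, which is available in SCOT), so Lemma~\ref{l31} will then deliver compactness of $f(E)$.

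First I would fix the setting: since $f$, $\ts$, $Y$, and $E$ are standard, the subspace $f(E) \subseteq Y$ is standard as well, and the goal is to apply Lemma~\ref{l31} to it. So let $y \in f(E)$ be arbitrary; I must show $y$ is nearstandard in $f(E)$. By transfer, any element of the image is of the form $y = f(x)$ for some $x \in E$ (more precisely, one transfers the statement that every element of $f(E)$ has a preimage in $E$ under $f$). Here $E$ is compact and, being a subset of a second-countable space, is itself second countable; by Lemma~\ref{l32} applied to $E$, the point $x$ is nearstandard in $E$, so there is a standard $c \in E$ with $x \simeq c$.

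The next step invokes continuity. Since $f$ is a standard continuous function and $c$ is a standard point, Lemma~\ref{l62b} tells us that $f$ is S-continuous at $c$, so from $x \simeq c$ we conclude $f(x) \simeq f(c)$, i.e.\ $y \simeq f(c)$. Now $f(c)$ is a standard point of $f(E)$, and the relation $y \simeq f(c)$ says precisely that $y$ lies in the halo of $f(c)$; one must check that this infinite proximity, inherited from the ambient space $Y$, restricts correctly to the subspace $f(E)$, so that $y$ is nearstandard \emph{in $f(E)$} and not merely in $Y$. This establishes that every $y \in f(E)$ is nearstandard in $f(E)$, and Lemma~\ref{l31} then yields that $f(E)$ is compact.

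The main obstacle I anticipate is bookkeeping about subspaces rather than any deep difficulty: one must be careful that the nearstandardness notion, the halo, and the relation $\simeq$ behave well when passing from $Y$ to the subspace $f(E)$, and that the second-countability (hence Lindel\"of) hypotheses transfer to the relevant subspaces so that both Lemma~\ref{l31} and Lemma~\ref{l32} genuinely apply. A secondary point is ensuring that $c \in E$ produced by Lemma~\ref{l32} indeed maps to a standard point $f(c) \in f(E)$, which follows because $f$ and $c$ are both standard. Once these subspace compatibility issues are handled, the proof is a clean concatenation of the compactness characterizations with S-continuity, requiring no choice beyond what SCOT already provides.
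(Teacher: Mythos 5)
Your proposal is correct and follows essentially the same route as the paper's own proof: Lemma~\ref{l32} applied to the compact set~$E$, the S-continuity of~$f$ at the resulting standard point via Lemma~\ref{l62b}, and Lemma~\ref{l31} applied to~$f(E)$ (whose Lindel\"of property, as you note, comes from second countability of the subspace). The one step you gloss over is the paper's opening move: the theorem is stated for arbitrary~$f$, $\ts$, $Y$, $E$, so one must first observe that it suffices to prove it when these are standard, the general case following by transfer --- you simply assumed standardness as given.
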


\begin{proof}
We prove the theorem under the assumption that  $f, T, Y$ are standard; its validity for arbitrary $ f, T. Y$ follows by transfer.
By
Lemma~\ref{l32}, every point~$x\in E$ is infinitely close to a
standard point~$p\in E$.  By the nonstandard characterisation of
continuity of~$f$ (Lemma~\ref{l62b}), the point~$f(x)$ is infinitely
close to~$f(p)$.  Thus every point~$f(x)$ in the image~$f(E)$ is
infinitely close to a standard point~$f(p)\in f(E)$.  By
Lemma~\ref{l31}, applied to~$f(E)$, the space~$f(E)$ is compact.
\end{proof}

The proof compares favorably with the traditional proof using
pullbacks of open covers, and is as effective (in the sense explained
in Section~\ref{s1}) as the traditional proof.

\section{Characterisation of uniform continuity}
\label{s8}

Let~$D,E$ be standard metric spaces; we will denote the distance
functions by~$|\cdot|$.  A standard map~$f\colon D\to E$ is
\emph{uniformly continuous} on~$D$ if
\begin{equation}
\label{e571z}
\begin{aligned}
(\forall\epsilon & \in\R^+) (\exists\delta\in\R^+) \\&
  \underline{(\forall x\in D) (\forall x'\in D)\;
    \big[|x'-x|<\delta \rightarrow |f(x')-f(x)|<\epsilon\big].}
\end{aligned}
\end{equation}

$f$ is \emph{S-continuous on}~$D$ if
\begin{equation}
\label{i2b}
(\forall x\in D)(\forall x'\in D)\left[x\simeq x'\;\rightarrow
\; f(x)\simeq f(x') \right].
\end{equation}

\begin{lemma}
If~$f$ is uniformly continuous on~$D$ then it is~$S$-continuous there.
\end{lemma}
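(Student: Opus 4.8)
The plan is to argue directly from the definitions, unwinding the statement of uniform continuity \eqref{e571z} and specializing it to an arbitrary pair of infinitely close points. I want to show that \eqref{i2b} holds, so I fix arbitrary $x,x'\in D$ with $x\simeq x'$ and aim to prove $f(x)\simeq f(x')$. By the definition of infinite proximity, the latter means that $|f(x)-f(x')|$ is infinitesimal, i.e.\ smaller than every standard positive real. So the real task is: given an arbitrary standard $\epsilon\in\R^+$, I must show $|f(x)-f(x')|<\epsilon$.

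First I would apply uniform continuity \eqref{e571z} to this standard $\epsilon$. A subtlety worth flagging is that $f$ is standard and $\epsilon$ is standard, so by Transfer (axiom T) I may take the witnessing $\delta$ to be \emph{standard} as well: the existence of some $\delta$ with the underlined property is an $\in$-formula with standard parameters $f,D,\epsilon$, so there is a standard such $\delta$. This standardness of $\delta$ is the crux of the whole argument. Next, because $x\simeq x'$ means $|x-x'|$ is infinitesimal, it is in particular smaller than the standard positive real $\delta$, so $|x-x'|<\delta$. Feeding this into the underlined implication in \eqref{e571z} yields $|f(x)-f(x')|<\epsilon$.

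Since $\epsilon\in\R^+$ was an arbitrary standard positive real, I conclude that $|f(x)-f(x')|$ is smaller than every standard positive real, which is exactly the statement that $f(x)\simeq f(x')$. As $x,x'$ were arbitrary with $x\simeq x'$, this establishes \eqref{i2b}, so $f$ is S-continuous on $D$.

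The step I expect to be the main (and really the only) obstacle is the justification that $\delta$ may be chosen standard. This is not automatic from \eqref{e571z} as written, where $\delta$ is merely existentially quantified; it relies essentially on the Transfer principle and on the hypothesis that $f$, $D$, and the quantified $\epsilon$ are standard. Once a standard $\delta$ is secured, the comparison ``infinitesimal $<$ standard $\delta$'' is immediate from the definition of infinitesimal, and the remainder is a routine substitution into the uniform-continuity implication.
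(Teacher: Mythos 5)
Your proof is correct and follows essentially the same route as the paper: fix a standard $\epsilon$, use downward (existential) Transfer with the standard parameters $f$, $D$, $\epsilon$ to obtain a \emph{standard} witness $\delta$, observe that $x\simeq x'$ forces $|x-x'|<\delta$, and conclude $|f(x)-f(x')|<\epsilon$ for every standard $\epsilon$, hence $f(x)\simeq f(x')$. You correctly identify the standardness of $\delta$ as the crux, which is precisely the point where the paper invokes downward transfer.
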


\begin{proof}
To show that condition~\eqref{e571z} implies~\eqref{i2b}, fix a
standard parameter~$\epsilon$.  By downward transfer, there is a standard~$\delta$ such that  the \underline{underlined} part of
formula~\eqref{e571z} holds:
\begin{equation}
\label{e572}
(\forall x\in  D)(\forall x'\in
 D)\;\big[|x'-x|<\delta\rightarrow| f(x')- f(x)|<\epsilon\big].
\end{equation}
If~$x\simeq x'$ then the condition~$|x-x'|<\delta$ is satisfied
regardless of the value of the standard number~$\delta>0$.  Therefore
\begin{equation}
\label{e573}
  |f(x')- f(x)|<\epsilon.
\end{equation}
Since~\eqref{e573} is true for each standard~$\epsilon>0$, we conclude
that~$ f(x')\simeq f(x)$, proving \eqref{i2b}.
\end{proof}

\begin{lemma}
\label{l62}
If~$f$ is S-continuous on~$D$ then~$f$ is uniformly continuous
there.
\end{lemma}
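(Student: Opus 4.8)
The plan is to derive the uniform-continuity formula~\eqref{e571z} from S-continuity~\eqref{i2b} by combining the Transfer axiom with Countable Idealisation (Lemma~\ref{countideal}), in a manner dual to the preceding lemma, which used only downward Transfer. Since $f$ and $D$ are standard, the formula $\chi(\epsilon)$ asserting the existence of a good $\del$—that is, $(\exists\delta\in\R^+)$ of the underlined implication in~\eqref{e571z}—is an $\in$-formula with standard parameters. Hence by Transfer it suffices to establish $\chi(\epsilon)$ for every \emph{standard} $\epsilon\in\R^+$. I would therefore fix a standard $\epsilon>0$ and aim to produce a $\delta>0$ (which may even be taken standard, by existential Transfer) witnessing the underlined implication.

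For this fixed standard $\epsilon$ I would argue by contradiction. Suppose no $\delta$ works; then for every $\delta\in\R^+$ there is a pair $x,x'\in D$ with $|x-x'|<\delta$ and $|f(x)-f(x')|\ge\epsilon$, in particular for each $\delta=\tfrac1{n+1}$ with $n$ standard. The key step is to feed this into Lemma~\ref{countideal}. Setting $\phi\big(m,(x,x')\big):=\big(|x-x'|<\tfrac1{m+1}\big)\wedge\big(|f(x)-f(x')|\ge\epsilon\big)$, the requirement ``$\forall m\le n\;\phi(m,(x,x'))$'' collapses to the single strongest constraint $|x-x'|<\tfrac1{n+1}$ together with $|f(x)-f(x')|\ge\epsilon$; so the failure of uniform continuity at $\delta=\tfrac1{n+1}$ supplies, for each standard $n$, a pair satisfying $\forall m\le n\;\phi(m,(x,x'))$. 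This is exactly the left-hand side of the equivalence in Lemma~\ref{countideal}, whose arbitrary-parameter clause accommodates the standard parameters $f,D,\epsilon$, so I obtain one pair $(x,x')$ with $\phi(n,(x,x'))$ for \emph{all} standard $n$.

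For this pair, $|x-x'|<\tfrac1{n+1}$ for every standard $n$ forces $|x-x'|$ below every standard positive real (using the Archimedean property, which places a standard $\tfrac1{n+1}$ beneath any prescribed standard bound), whence $x\simeq x'$; meanwhile $|f(x)-f(x')|\ge\epsilon$ with $\epsilon$ standard gives $f(x)\not\simeq f(x')$. This contradicts S-continuity~\eqref{i2b}, and the proof is complete. The main obstacle is the correct orchestration of the two nonstandard principles: Transfer must first reduce the problem to a standard $\epsilon$—without which the inference $|f(x)-f(x')|\ge\epsilon\Rightarrow f(x)\not\simeq f(x')$ would fail for an infinitesimal $\epsilon$—and only then does Countable Idealisation convert the $\delta$-by-$\delta$ failure into a single genuinely infinitely close pair witnessing the breakdown of S-continuity.
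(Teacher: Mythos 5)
Your proof is correct, but it reaches the key pair of witnesses by a genuinely different mechanism than the paper. The paper argues the contrapositive: negating \eqref{e571z} and applying downward Transfer yields a standard $\epsilon>0$ with the internal statement \eqref{e574} holding for \emph{all} $\delta\in\R^+$; since that statement is internal, it can simply be instantiated at a single infinitesimal $\delta_0>0$, immediately producing $x\simeq x'$ with $|f(x')-f(x)|>\epsilon$. You instead restrict attention to the standard scales $\delta=\tfrac1{n+1}$ and invoke Countable Idealisation (Lemma~\ref{countideal}) to glue the resulting bad pairs into one pair that is bad at every standard scale, then recover $x\simeq x'$ via the (transferred) Archimedean property. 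Both routes are valid in SPOT and your applications of Transfer and of Lemma~\ref{countideal} check out (the coding of the pair $(x,x')$ as a single witness and the monotonicity making $\phi(n,\cdot)$ imply $\phi(m,\cdot)$ for $m\le n$ are fine). What the paper's route buys is economy: no idealisation is needed at all, only Transfer plus the existence of an infinitesimal. What your route buys is robustness: it uses only the failure of uniform continuity at \emph{standard} scales, which is exactly the style of argument needed in settings where one cannot instantiate at an infinitesimal parameter directly---compare the paper's Section~\ref{s6}, where S-continuity implies continuity in a first countable space is proved by precisely your Countable Idealisation pattern over a countable neighborhood base.
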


\begin{proof}
We will show the contrapositive statement, namely that~$\neg
\eqref{e571z}$ implies~$\neg \eqref{i2b}$.  Assume the negation
of~\eqref{e571z}.  By downward transfer it follows that there exists a standard
number~$\epsilon>0$ such that
\begin{equation}
\label{e574}
(\forall \delta\in\R^+)(\exists  x\in D)(\exists x' \in
D)\;\big[|x'-x|<\delta \;\wedge\; |f(x')-f(x)|>\epsilon\big].
\end{equation}
 The formula is true
for all positive~$\delta$, so in particular it holds for an
infinitesimal~$\delta_0>0$.  For this value, we obtain
\begin{equation}
\label{e437}
(\exists x\in  D)(\exists x'\in D)\;\big[|x'-x|<\delta_0
\;\wedge\;| f(x')- f(x)|>\epsilon\big].
\end{equation}
Fix such~$x$ and~$x'$. The condition~$|x'-x|<\delta_0$ implies
that~$x\simeq x'$, while~$| f(x')- f(x)|>\epsilon$.  As the lower
bound~$\epsilon>0$ is standard, it follows that~$f(x')\not\simeq
f(x)$.  This violates condition \eqref{i2b} and establishes the
required contrapositive implication~$\neg \eqref{e571z} \implies \neg
\eqref{i2b}$.
\end{proof}

\section{Continuity implies uniform continuity}
\label{s9}

As shown in Section~\ref{s8}, the theory SPOT proves that uniform
continuity of a map between metric spaces amounts to S-continuity at
all points (standard and nonstandard) of the domain.

\begin{theorem}
A continuous map from a compact metric space to a metric space is
uniformly continuous.
\end{theorem}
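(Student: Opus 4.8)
The plan is to combine the nonstandard characterisations of compactness and of uniform continuity that have already been established, so that the proof becomes essentially a one-line application of the machinery from Sections~\ref{s6}--\ref{s8}. Let $f\colon D\to E$ be a continuous map from a compact metric space $D$ to a metric space $E$. As in Section~\ref{s7}, I would first reduce to the case where $f$, $D$, and $E$ are standard, the general case following by transfer. A metric space is first countable, and a compact metric space is separable, hence second countable, so all the hypotheses needed to invoke the earlier lemmas are in place.

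The key steps, in order, are as follows. First, since $D$ is compact and second countable, Lemma~\ref{l32} tells us that every point $x\in D$ is nearstandard in $D$, i.e.\ there is a standard $p\in D$ with $x\simeq p$. Second, I would take two arbitrary points $x,x'\in D$ with $x\simeq x'$ and show $f(x)\simeq f(x')$, which is exactly the definition \eqref{i2b} of S-continuity on $D$. By the first step, $x$ is infinitely close to some standard $p\in D$; since $\simeq$ is a transitive relation, $x'\simeq x\simeq p$ forces $x'\simeq p$ as well. Third, because $f$ is continuous and $p$ is standard, Lemma~\ref{l62b} gives that $f$ is S-continuous at $p$, so from $x\simeq p$ we get $f(x)\simeq f(p)$, and likewise from $x'\simeq p$ we get $f(x')\simeq f(p)$. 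By transitivity of $\simeq$ once more, $f(x)\simeq f(x')$. This establishes that $f$ is S-continuous on all of $D$. Finally, Lemma~\ref{l62} converts S-continuity on $D$ into uniform continuity on $D$, completing the proof.

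The only point requiring genuine care is the second step, namely reducing the proximity of the \emph{arbitrary} (possibly nonstandard) points $x,x'$ to a common \emph{standard} point $p$. This is where compactness does its essential work: S-continuity of $f$ is only guaranteed by Lemma~\ref{l62b} at standard points, so without the compactness-driven nearstandardness from Lemma~\ref{l32} one could not bridge from the hypothesis $x\simeq x'$ to the conclusion $f(x)\simeq f(x')$. I expect this to be the main conceptual obstacle, though it is resolved cleanly by transitivity of the infinite-proximity relation; everything else is a direct citation of the preceding lemmas, and no new idealisation or standardisation argument is needed beyond what Lemma~\ref{l32} already supplies.
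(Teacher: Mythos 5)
Your proposal is correct and follows essentially the same route as the paper's own proof: Lemma~\ref{l32} supplies nearstandardness of every point, transitivity of $\simeq$ plus Lemma~\ref{l62b} at the standard point yields S-continuity everywhere, and Lemma~\ref{l62} converts this to uniform continuity, with the standard-parameters reduction handled by transfer. You are even slightly more explicit than the paper in justifying the applicability of Lemma~\ref{l32} (compact metric implies second countable) and in citing Lemma~\ref{l62b} by name.
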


\begin{proof}
Let $f: E \to Y$ where $f, E, Y$ are standard. By the characterisation of the compactness
of~$E$ (Lemma~\ref{l32}), if~$x\in E$ then~$x$ is infinitely close to
a standard point~$p\in E$.  For each ~$x'\simeq x$, one has~$x'\simeq
p\simeq x$.  If~$f$ is continuous at~$p$ then~$f(x')\simeq f(p)\simeq
f(x)$, and therefore~$f$ is S-continuous at all points of~$E$,
establishing uniform continuity by Lemma~\ref{l62}.
By transfer, the theorem holds for arbitrary $f, E, Y$.
\end{proof}

This proof in SPOT compares favorably with the traditional proof:
given~$\epsilon>0$, we need to find~$\delta>0$ such that if
$d_E(x,y)<\delta$ then one has~$d_Y(f(x),f(y))<\epsilon$.  By
continuity, for each~$x\in E$ there exists a~$\delta_x>0$ such that
if~$d(x,y)< \delta_x$ then~$d(f(x),f(y))<\frac\epsilon2$.
Then
\[
\left\{B(x,\tfrac{\delta_x}{2}): x\in E\right\}
\]
is an open cover of~$E$.  By compactness, there are points
$x_1,\ldots,x_n\in E$ such that~$\left\{B(x_1,\frac{\delta_1}2),
\ldots, B(x_n,\frac{\delta_n}2)\right\}$ is a finite subcover
covering~$E$.  Let~$\delta=\min(\frac{\delta_1}2, \ldots,
\frac{\delta_n}2)$.  If~$y,z\in E$ and~$d(y,z)<\delta\leq
\frac{\delta_k}2$ for each~$k=1,\ldots,n$, then by the triangle
inequality
\[
d(x_k,z) \leq d(x_k,y)+d(y,z) \leq \tfrac{\delta_k}2 +
\tfrac{\delta_k}2 < \delta_k.
\]
Therefore 
\[
d(f(y),f(z))\leq d(f(y),f(x_k))+ d(f(x_k), f(z)) < \tfrac\epsilon2 +
\tfrac\epsilon2 = \epsilon,
\]
establishing uniform continuity.

\section{Heine--Borel theorem}
\label{s10}

Here we present an effective approach to the Heine--Borel theorem
exploiting the characterisation of compactness of Section~\ref{s3}.  A
standard set~$C$ in a standard metric space is \emph{closed} if every
standard element near some element of~$C$ is actually in~$C$.  
It is \emph{bounded}%
\footnote{Here~$C$ must be a subset of a metric space for the notion
  of boundedness to make sense.  Note that Countable Idealisation (CI)
  is needed to prove its equivalence to the usual definition ``There
  is a real~$r$ such that for all~$x \in C$,~$d(x,p) < r$ holds for
  some (equivalently, all)~$p\in C$.''  CI is available in SPOT.}
if it contains no unlimited elements.  If standard~$x$ and~$y$ are
infinitely close then~$x=y$.

\begin{lemma}
If~$C$ is compact then it is closed and bounded.
\end{lemma}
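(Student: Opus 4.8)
The plan is to prove the contrapositive-style characterization by using the nonstandard criteria for closedness and boundedness that the excerpt has just introduced, together with the compactness characterization of Lemma~\ref{l32}. Since all the hypotheses concern a \emph{standard} set $C$ in a standard metric space, I would first reduce to the standard case (the general case following by transfer, as in the previous theorems), and then establish closedness and boundedness separately as two short arguments driven by the fact that every point of a compact set is nearstandard.

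First I would prove boundedness. By the definition just given, $C$ is bounded if it contains no unlimited elements, so suppose toward a contradiction that $C$ contains an unlimited element $x$. Since $C$ is compact, Lemma~\ref{l32} tells us that $x$ is nearstandard, i.e.\ infinitely close to some standard point $p \in C$. But then $x \simeq p$ with $p$ standard forces $x$ to be limited (it differs from a standard real by an infinitesimal, hence $|x| < |p| + 1$ with $|p|+1$ standard), contradicting the assumption that $x$ is unlimited. Hence $C$ contains no unlimited elements and is bounded.

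Next I would prove closedness using the nonstandard definition stated in the excerpt: $C$ is closed if every standard element near some element of $C$ is itself in $C$. So let $q$ be a standard point that is infinitely close to some element $x \in C$. By compactness and Lemma~\ref{l32}, the point $x$ is infinitely close to a standard point $p \in C$. Then $q \simeq x \simeq p$, and since both $q$ and $p$ are standard, the final sentence before the lemma ("If standard $x$ and $y$ are infinitely close then $x = y$") gives $q = p \in C$. Thus $q \in C$, which is exactly what closedness requires.

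The arguments are short, so the main subtlety — rather than an obstacle — will be bookkeeping about standardness: making sure at each step that the point I invoke the "two standard infinitely-close points coincide" principle on really is standard, and that the reduction to standard $C$, $f$ is legitimate. In particular, the boundedness half quietly uses that $x \simeq p$ with $p$ standard implies $x$ limited, which is immediate from the definition of infinite proximity but worth stating. I expect no genuine difficulty, since both halves are direct applications of Lemma~\ref{l32} combined with the two nonstandard definitions furnished immediately above the lemma statement.
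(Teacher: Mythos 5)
Your proof is correct and follows essentially the same route as the paper: both halves use the nearstandard characterization of compactness (Lemma~\ref{l32}), with boundedness following because a point infinitely close to a standard point cannot be unlimited, and closedness following from the principle that two infinitely close standard points coincide. Your version merely makes explicit the contradiction argument that the paper compresses into one sentence, and adds a transfer reduction that is unnecessary here since the lemma already concerns a standard set~$C$.
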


\begin{proof}
Every element of~$C$ is nearstandard in~$C$ by compactness, so there
are no unlimited elements, i.e.,~$C$ is bounded. Let a standard
point~$x$ be infinitely close to some~$y\in C$. By compactness of~$C$
there is a standard~$z \in C$ such that~$ z\simeq y$.  Thus~$x = z\in
C$ and~$C$ is closed.
\end{proof}

\begin{lemma}
For standard~$n$, if~$C\subseteq \R^n$ is closed and bounded then it
is compact.
\end{lemma}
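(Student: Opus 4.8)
The plan is to deduce compactness from Lemma~\ref{l31}, whose hypotheses for a standard space are that it be Lindel\"of and that every one of its points be nearstandard in it. As in the preceding theorems of this section, I would first prove the statement under the assumption that $C$ is standard (so that, $n$ being standard, the ambient $\R^n$ is standard as well), and then obtain the general case by transfer, since ``closed and bounded implies compact'' is an $\in$-formula whose only parameter, $n$, is standard.

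First I would dispose of the Lindel\"of hypothesis. Because $n$ is standard, $\R^n$ carries the explicit countable base of balls with rational centers and rational radii and is therefore second countable already in ZF; the subspace $C$ inherits the countable base obtained by intersecting these balls with $C$, so $C$ is second countable and hence Lindel\"of over ZF+ACC, a fragment available in SCOT. The heart of the argument is then to show that an arbitrary $x\in C$ is nearstandard in $C$. Writing $x=(x_1,\ldots,x_n)$, I would use boundedness of $C$: since $C$ contains no unlimited element, $x$ is not unlimited, so each coordinate $x_i$ is a limited real. By Standard Part (SP), each $x_i$ is infinitely close to a standard real $\sh(x_i)$, and I set $p=(\sh(x_1),\ldots,\sh(x_n))$. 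As $n$ is standard, the finite tuple $p$ is a standard point of $\R^n$, and coordinatewise proximity together with the finiteness of $n$ gives $x\simeq p$. Now $p$ is a standard point infinitely close to the element $x\in C$, so closedness of $C$ (every standard point near an element of $C$ lies in $C$) forces $p\in C$. Hence $x$ lies in the halo of the standard point $p\in C$, i.e.\ $x$ is nearstandard in $C$. Having verified both hypotheses, Lemma~\ref{l31} yields that the standard $C$ is compact, and transfer then delivers the result for every $C\subseteq\R^n$.

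The step I expect to require the most care is concluding $p\in C$: this is precisely where the closedness hypothesis is consumed, and it depends on $p$ being genuinely standard rather than merely limited. That standardness is guaranteed exactly by the hypothesis that $n$ is standard, which makes the finite tuple of shadows a standard object; without it the ``point'' $p$ could fail to be standard and the invocation of closedness would break down. The remaining delicacy is purely bookkeeping about which definition of closed and bounded is in force: the infinitesimal definitions are used in the standard case, and their agreement with the classical $\in$-formulas for standard sets is what legitimizes the final transfer to arbitrary $C$. The Lindel\"of verification, though routine, is the only place invoking choice, consistent with the paper's claim that the theorem is established conservatively over ZF+ADC.
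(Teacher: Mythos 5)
Your proposal is correct and follows essentially the same route as the paper: take $x\in C$, use boundedness to get that $x$ is limited, take the (coordinatewise) standard part to obtain a standard $p\simeq x$, invoke closedness to conclude $p\in C$, and deduce compactness via the nearstandardness criterion of Lemma~\ref{l31}. Your explicit verification of the Lindel\"of hypothesis and the concluding transfer step are details the paper leaves implicit, but they do not change the argument.
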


\begin{proof}
For standard~$n$, the condition of infinite proximity in~$\R^n$
amounts to the condition of infinite proximity for each of the~$n$
coordinates.  A similar remark applies to boundedness.

Let~$x\in C$.  Since~$C$ is bounded,~$x$ is limited and hence
infinitely close to a standard~$y\in\R^n$.  By closure,~$y\simeq x$
entails~$y\in C$.  Thus~$x$ is nearstandard in~$C$.  This proves
that~$C$ is compact.
\end{proof}

\section{Radically Elementary Probability Theory}
\label{s11}

Nelson's \emph{Radically Elementary Probability Theory} is based on
traditional mathematics plus axioms 1 through 5 stated in
\cite[Section~4, pp.\,13--14]{Ne87}.  The axioms 1 through 4 hold in
SPOT.\, Axiom~5, which according to Nelson is rarely used, is the
axiom~CC (see Definition~\ref{d44}).  It follows that \emph{Radically
  Elementary Probability Theory} is conservative over ZF+ADC.
Furthermore, it follows that all results from \cite{Ne87}
automatically hold in SCOT.\, In particular, this includes Nelson's
S-integral.

Further applications include proofs in SPOT of Peano and Osgood
theorems for ordinary differential equations \cite{23z}.

Our perspective fits with a relative view of the foundations of
mathematics such as that provided by Hamkins' multiverse.  For the
relation between the Gitman--Hamkins ``toy'' model of the multiverse
\cite{GH} and nonstandard analysis, see Fletcher et
al.~\cite[Section~7.3]{17f}.

\medskip
\noindent {\bf Acknowledgment}.  The authors wish to thank the referee
for his constructive critique of the first draft.  We are grateful to
Karl Kuhlemann for helpful comments.

\end{document}